\newtheorem{thm}{Theorem}[section]
\newtheorem{prop}[thm]{Proposition}
\newtheorem{lem}[thm]{Lemma}
\newtheorem{dfn}{Definition}
\newtheorem{rmk}{Remark}[section]
\newtheorem{conv}{Convention}
\newtheorem{for}{Formula}
\DeclareMathOperator{\Sym}{Sym}
\DeclareMathOperator{\ev}{ev}
\DeclareMathOperator{\sgn}{sgn}
\DeclareMathOperator{\Spec}{Spec}
\DeclareMathOperator{\Gr}{Gr}
\DeclareMathOperator{\ind}{ind}
\DeclareMathOperator{\Jac}{Jac}
\DeclareMathOperator{\Tr}{Tr}
\DeclareMathOperator{\rk}{rk}
\DeclareMathOperator{\codim}{codim}
\DeclareMathOperator{\Lines}{Lines}
\DeclareMathOperator{\mult}{mult}
\title{A quadratically enriched count of lines on a degree 4 del Pezzo surface.}
\author{Cameron Darwin}
\date{}							
\begin{document}
\maketitle

\abstract{
Over an algebraically closed field $k$, there are 16 lines on a degree 4 del Pezzo surface, but for other fields the situation is more subtle. In order to improve enumerative results over perfect fields, Kass and Wickelgren introduce a method analogous to counting zeroes of sections of smooth vector bundles using the Poincar{\'e}-Hopf theorem in \cite{index}. However, the technique of Kass-Wickelgren requires the enumerative problem to satisfy a certain type of orientability condition. The problem of counting lines on a degree 4 del Pezzo surface does not satisfy this orientability condition, so most of the work of this paper is devoted to circumventing this problem. We do this by restricting to an open set where the orientability condition is satisfied, and checking that the count obtained is well-defined, similarly to an approach developed by Larson and Vogt in \cite{larsonvogt}.
}

\section{Introduction}

\begin{conv}
Throughout, we will assume that $k$ is a perfect field of characteristic not equal to 2. In statements of propositions, this will be explicitly reiterated when needed.
\end{conv}

There are 16 lines on a smooth degree 4 del Pezzo surface $\Sigma$ over an algebraically closed field $k$ of characteristic not equal to 2---that is to say, there are 16 linear embeddings $\mathbb{P}^1_k \to \Sigma$ up to reparametrization. When $k$ is not algebraically closed, the situation is more subtle. For starters, one must allow ``lines'' to include linear embeddings $\mathbb{P}^1_{k'} \to \Sigma$, for finite extensions $k'/k$. Moreover, there may not be 16 such embeddings. To see why, it is useful to recall how the count is done.

A common strategy for solving enumerative problems is linearization---that is, one seeks to express the solution set as the zero locus of a section of a vector bundle $E$ over some ambient moduli space $X$. In the case of counting lines on a degree 4 del Pezzo, $X$ is $\Gr_k(2,5)$, the Grassmannian of lines in $\mathbb{P}^4_k$, and $E$ is $\Sym^2(S^\vee)\oplus\Sym^2(S^\vee)$, where $S$ is the canonical subplane bundle over $\Gr_k(2,5)$.

$\Sigma$ can be written as the complete intersection of two quadrics $f_1$ and $f_2$ in $\mathbb{P}^4$ (pg. 100 of \cite{wittenberg}). Composing a line $\mathbb{P}^1_{k'} \to S$ with the embedding $\Sigma = Z(f_1, f_2) \to \mathbb{P}^4_k$ determines a linear embedding $\mathbb{P}^1_{k'} \to \mathbb{P}^4_k$, which can itself be identified with a closed point in $\Gr_k(2,5)$ with residue field $k'$. To identify which closed points in $\Gr_k(2,5)$ correspond to lines on $\Sigma$, one notices that for each line in $\mathbb{P}^4_k$, i.e. each linear embedding $L : \mathbb{A}^2_{k'} \to \mathbb{A}^5_k$, $f_1$ and $f_2$ pull back to degree 2 polynomials on $\mathbb{A}^2_{k'}$, i.e. to elements of $\Sym^2(S_L^\vee)$. Thus $f_1$ and $f_2$ determine two sections, $\sigma_1$ and $\sigma_2$ respectively, of $\Sym^2(S^\vee)$, and the set of lines on $\Sigma$ is precisely the zero locus $Z(\sigma_1 \oplus \sigma_2)$.

For general $f_1$ and $f_2$, $Z(\sigma_1 \oplus \sigma_2)$ consists of finitely many closed points (Theorem 2.1 of \cite{debarremanivel}). The most na{\"i}ve count of lines on $\Sigma$---a literal count of the number of linear embeddings $\mathbb{P}^1_{k'} \to \Sigma$---would simply be $\#Z(\sigma_1 \oplus \sigma_2)$, but this number does not always come out to 16. To achieve an invariant answer, one could weight the lines on $\Sigma$ by the degree of the field extension $\kappa(L)/k$, and then one would have that
\[
\sum_{L \in Z(\sigma_1 \oplus \sigma_2)} [\kappa(L):k] = 16.
\]
However, this is not a genuine improvement of the count for algebraically closed $k$:

Fix an algebraic closure $\overline{k}$ of $k$. Then $\overline{X} := \Gr_{\overline{k}}(2,5)$ is the base change of $X$ from $k$ to $\overline{k}$, and $\overline{E} := \Sym^2(\overline{S}^\vee)\oplus\Sym^2(\overline{S}^\vee)$ (where $\overline{S}$ is the canonical subplane bundle over $\Gr_{\overline{k}}(2,5)$) is the base change of $E$ from $k$ to $\overline{k}$. Letting $\overline{f}_1$ and $\overline{f}_2$ denote the base changes of $f_1$ and $f_2$, the section $\overline{\sigma}_1 \oplus \overline{\sigma}_2$ of $\overline{X}$ corresponding to $\overline{f}_1$ and $\overline{f}_2$ as described earlier, is itself the base change of $\sigma_1 \oplus \sigma_2$. Moreover, the zero locus $\overline{\Sigma} = Z(\overline{f}_1, \overline{f}_2)$ is a smooth degree 4 del Pezzo over $\overline{k}$, and hence the zero locus of $\overline{\sigma}_1 \oplus \overline{\sigma}_2$ consists precisely of the lines on $\overline{\Sigma}$, of which there are 16.

To prove that the weighted sum of lines on $\Sigma$ is 16, one considers the fact that $Z(\overline{\sigma}_1 \oplus \overline{\sigma}_2)$ is the base change of $Z(\sigma_1 \oplus \sigma_2)$. Considering the base change projection
\[
c : Z(\overline{\sigma}_1 \oplus \overline{\sigma}_2) \to Z(\sigma_1 \oplus \sigma_2),
\]
one has that, for each $L \in Z(\sigma_1 \oplus \sigma_2)$, that $[\kappa(L) : k] = \#c^{-1}(L)$, and consequently
\[
\sum_{L \in Z(\sigma_1 \oplus \sigma_2)} [\kappa(L):k] = \sum_{L \in Z(\sigma_1 \oplus \sigma_2)} \#c^{-1}(L) = \# Z(\overline{\sigma}_1 \oplus \overline{\sigma}_2) = 16.
\]

Thus, while weighting the lines on $\Sigma$ by $[\kappa(L) : k]$ achieves a consistent count of 16, this is really nothing more than the original count that there are 16 lines on a smooth degree 4 del Pezzo surface over an algebraically closed field. To improve upon this count, we will use an approach introduced by Kass and Wickelgren in \cite{index} to count lines on smooth cubic surface:

Consider for a moment the classical case of a vector bundle $E$ of rank $r$ over a smooth closed manifold $X$ of dimension $r$, and consider a section $s$ of $E$ with only isolated zeroes. One might ask whether the number of zeroes of $s$ can change as $s$ is changed by a small homotopy. The answer, of course, is yes. If one studies how this can happen, one discovers two phenomena: a single zero can split into multiple zeroes, or two zeroes can cancel each other out. The former problem is analogous to the situation of a solution to an enumerative problem over $k$ splitting into multiple solutions over a field extension $k'/k$. To account for this problem, one can define a local multiplicity:

\begin{dfn}[local multiplicity]\label{mult}
Let $E$ be a smooth rank $r$ vector bundle over a smooth, closed manifold $X$ of dimension $r$. Let $s$ be a section of $E$ and $z$ an isolated zero of $s$. By choosing an open $r$-ball around $z$ and trivializing $E$ over that ball, one obtains a map $\mathbb{R}^r \to \mathbb{R}^r$ which vanishes only at 0, hence inducing a map $S^r \to S^r$ whose degree is well-defined up to a sign. Define the local multiplicity at $z$ to be the absolute value of this degree, which we will denote $\mult_z s$.
\end{dfn}

In some sense, the local multiplicity at $z$ is the ``expected'' number of zeroes $z$ will split into if $s$ is homotoped to be transversal to the zero section. Consequently, one might hope that counting local multiplicities is sufficient, in the sense that the sum
\[
\sum_{z \in Z(s)} \mult_z s
\]
is independent of $s$.

However, this does not deal with the possibility of two zeroes canceling each other out: for a section $s$ of $E$ which is already transversal to the zero section, every zero has multiplicity 1 (in the sense of Definition \ref{mult}), and hence weighting zeroes by their multiplicity simply obtains the set theoretic size of the zero set of $s$---but, as is well known, this number is still not well-defined.

The upshot of this discussion is that there is a way to weight the zeroes of a section of a smooth vector bundle which is defined purely in terms of local data, namely the multiplicity, which is analogous to weighting zeroes by the degree of the extension $\kappa(z)/k$. In the algebraic case, the latter weighting does give a well-defined count, although an unsatisfying one, while in the topological case, it does not even give a well-defined count. 

Now we will recall how the problem of giving a well-defined count is solved in the topological case, in order to motivate, by analogy, Kass-Wickelgren's approach to giving a more nuanced count in the algebraic case:

\begin{dfn}[orientation]
Let $V$ be a real vector space. Then we will think of an orientation on $V$ as a choice of a positive half of $\det V$. More generally, for a vector bundle $E$, if removing the zero section disconnects the total space of $\det E$, then an orientation on $\det E$ is a choice of a positive half of $\det E \smallsetminus \{zero\ section\}$. Note that this is equivalent to trivializing $\det E$.
\end{dfn}

The topological problem is classically solved by making an orientability assumption on $E$ and $X$. In the simplest case, one assumes that both $E$ and $X$ are oriented. Then the differential $ds$ induces a well-defined isomorphism $T_z X \to E_z$ at every zero $z$ of $s$, and $z$ can be given a sign $\sgn_zs \in \{\pm 1\}$ according to whether $ds$ preserves orientation or reverses orientation. The Poincare-Hopf theorem then says that the sum
\[
\sum_{z \in Z(s)} \sgn_zs
\]
is independent of the section $s$.

The calculation of the local signs $\sgn_zs$ is both straight-forward and informative: an orientation on $X$ induces an orientation of $T_zX$, and an orientation of $E$ induces an orientation of $E_z$. Now one can choose a neighborhood $U$ containing $z$ and coordinates $\{u^i\}$ on $U$ so that
\[
\frac{\partial}{\partial u^1} \wedge \cdots \wedge \frac{\partial}{\partial u^r}
\]
is in the positive half of $\det T_z X$. Next, one chooses a trivialization $\{e_j\}$ of $E|_U$ so that
\[
e_1 \wedge \cdots \wedge e_r
\]
is in the positive half of $\det E_z$. Together, these express $\sigma|_U$ as a map $\{f^i\} : \mathbb{R}^r \to \mathbb{R}^r$ which has a zero at $z$. The determinant of the Jacobian matrix of first partial derivatives
\[
\left( \frac{\partial f^i}{\partial u^j}\right)
\]
at $z$, which we will denote $\Jac_z (\sigma; u,e)$, depends on the choice of coordinates $\{u^i\}$, and on the trivialization $\{e_j\}$, but its sign does not. One then computes that
\[
\sgn_z s = \left\{
\begin{array}{lcl}
+1 & \ \ \ \ \ & \Jac_z(s;u,e) \sigma > 0 \\
-1 & \ \ \ \ \ & \Jac_z(s;u,e) \sigma < 0
\end{array}
\right..
\]

Unpacking this a bit more, we should note that counting the sign of the determinant has a rather straightforward homotopical interpretation: consider any linear isomorphism $\phi : \mathbb{R}^r \to \mathbb{R}^r$. Considering $S^r$ as the one point compactification of $\mathbb{R}^r$, $\phi$ determines a homeomorphism $\widetilde{\phi} : S^r \to S^r$, and it is precisely the sign of $\det \phi$ which determines the homotopy class of $\widetilde{\phi}$. Moreover, the identification of the sign of $\Jac_z(s;u,e)$ with a homotopy class of maps $S^r \to S^r$ underlies a rather direct approach to proving the Poincare-Hopf theorem, and is also an easy way to motivate the approach taken by Kass and Wickelgren:

Stably, a homotopy class of self-homeomorphisms of a sphere corresponds to an element of $\pi_0^S$, which is isomorphic to $\mathbb{Z}$. In the stable motivic homotopy category over $k$, $\pi^S_0$ is isomorphic to $GW(k)$, the Grothendieck-Witt group\footnote{More precisely, $GW(k)$ is obtained by beginning with the semiring of isomorphism classes of symmetric non-degenerate bilinear forms over $k$, with tensor product as multiplication and direct sum as addition, and group-completing the addition.} of isomorphism classes of symmetric non-degenerate bilinear forms over $k$ \cite{morel}. 

An explicit description of $GW(k)$ in terms of generators and relations can be given (this is Lemma 2.9 of \cite{algtop}; see \cite{mh} Ch. III.5 for discussion), which it will be convenient for us to record:
\begin{prop}\label{presentation}
Let $k$ be a field with characteristic not equal to 2, and consider the abelian group $GW^{pr}(k)$ generated by symbols $\langle a \rangle $ for all $a \in k^\times$ subject to the relations
\begin{enumerate}[i.]
\item $\langle uv^2 \rangle  = \langle u \rangle$
\item $ \langle u \rangle + \langle - u \rangle = \langle 1 \rangle + \langle -1 \rangle $
\item $\langle u \rangle  + \langle v \rangle = \langle u + v \rangle  + \langle (u + v)uv \rangle$ if $u + v \neq 0$
\end{enumerate}
$GW^{pr}(k)$ becomes a ring under the multiplication $\langle u \rangle \cdot \langle v \rangle = \langle uv \rangle$, and sending $\langle a \rangle$ to the bilinear form $k \otimes k \to k$ given by $x \otimes y \mapsto axy$ extends to a ring isomorphism $GW^{pr}(k) \to GW(k)$. We will implicitly assume this identification, and simply use $\langle a\rangle$ to refer to the corresponding bilinear form.
\end{prop}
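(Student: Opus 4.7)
The plan is to construct an additive map $\varphi : GW^{pr}(k) \to GW(k)$ by sending the generator $\langle a \rangle$ to the isomorphism class of the rank-one bilinear form $(x,y) \mapsto axy$, verify that it descends through the three defining relations, and then argue bijectivity separately. Once bijectivity is established, the multiplication on $GW^{pr}(k)$ transported from $GW(k)$ is visibly the one described by $\langle u \rangle \cdot \langle v \rangle = \langle uv \rangle$, since tensor product of bilinear forms satisfies $\langle u \rangle \otimes \langle v \rangle \cong \langle uv \rangle$ under the canonical identification $k \otimes_k k \cong k$.

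For well-definedness, relation (i) follows from the change of basis $x \mapsto x/v$, which carries the form $uv^2 xy$ to $uxy$. Relation (ii) is the classical identification of $\langle u \rangle \oplus \langle -u \rangle$ with the hyperbolic plane, hence with $\langle 1 \rangle \oplus \langle -1 \rangle$. For relation (iii), the two-dimensional form $\langle u, v \rangle$ represents $u+v$ via the vector $(1,1)$, so by completing an orthogonal basis starting from this vector one obtains a diagonalization $\langle u+v, d \rangle$; matching determinants up to squares forces $d \equiv uv(u+v) \pmod{(k^\times)^2}$.

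Surjectivity of $\varphi$ is immediate from the fact that in characteristic not equal to $2$, every nondegenerate symmetric bilinear form admits an orthogonal basis (Gram--Schmidt), so every class in $GW(k)$ is a sum of rank-one classes. For injectivity, the main ingredient is Witt's chain equivalence theorem: two diagonal forms $\langle a_1, \dots, a_n \rangle$ and $\langle b_1, \dots, b_n \rangle$ that are isometric can be connected by a finite sequence of \emph{elementary} steps, each of which either rescales a single diagonal entry by a square or replaces an adjacent pair $(a_i, a_j)$ with $(a_i + a_j,\, (a_i + a_j) a_i a_j)$ when $a_i + a_j \neq 0$. These are precisely the moves encoded by relations (i) and (iii), so any isometry of diagonal forms lifts to an equality in $GW^{pr}(k)$; the role of relation (ii) is to accommodate the remaining ``degenerate'' step of adding or subtracting a hyperbolic plane, which is needed to compare diagonalizations of different lengths and to stabilize before applying Witt cancellation.

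The main obstacle is this injectivity step. The isomorphism type of a bilinear form alone does not exhibit a chain of elementary moves relating two diagonalizations, and producing such a chain is precisely the nontrivial content of Witt's chain equivalence. I would simply quote this theorem from Milnor--Husemoller, Chapter III.5, where it is proved by induction on dimension using Witt cancellation, rather than reprove it here. Once injectivity is in hand, the ring structure is automatic: the relations are stable under multiplication by any $\langle w \rangle$ (for instance (iii) multiplied by $\langle w \rangle$ reduces, via (i), to another instance of (iii)), so $\langle u \rangle \cdot \langle v \rangle = \langle uv \rangle$ gives a well-defined ring structure which $\varphi$ visibly intertwines with tensor product of bilinear forms.
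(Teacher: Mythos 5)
The paper does not actually prove Proposition~\ref{presentation}: it cites it as Lemma~2.9 of \cite{algtop} and refers to \cite{mh}, Ch.~III.5, for discussion, so there is no in-paper argument to compare against. Your sketch is the standard one found in those references and is essentially correct: the well-definedness checks for (i)--(iii) are right (in particular, for (iii), representing $u+v$ by the vector $(1,1)$ and matching discriminants up to squares), surjectivity follows from the existence of orthogonal bases in characteristic $\neq 2$, and Witt's chain-equivalence theorem is the correct nontrivial input for injectivity. Two small clarifications are worth making. Witt's chain-equivalence theorem as usually stated permits replacing a pair $(a_i,a_j)$ by an \emph{arbitrary} isometric pair $(a_i',a_j')$, not only by $(a_i+a_j,\,(a_i+a_j)a_ia_j)$; to see that such a general binary move is a consequence of (i) and (iii), write $a_i' = a_i x^2 + a_j y^2$ and treat separately the cases where one of $x,y$ vanishes (use (i) plus the discriminant) and where both are nonzero (rescale by squares via (i), apply (iii), and match discriminants via (i)). Also, the role you assign to relation (ii) is not quite right: Witt cancellation is exact (no stabilization is required to pass from an equality in the Grothendieck group to an isometry of forms) and ranks agree automatically, so injectivity runs on (i) and (iii) alone; indeed (ii) is itself a formal consequence of (i) and (iii) via the binary-isometry reduction applied to $\langle u,-u\rangle \cong \langle 1,-1\rangle$. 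Neither point is a gap, since $\varphi$ does respect (ii) and the argument is sound; they are just places where the exposition misstates what is doing the work.
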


Now consider a linear isomorphism $\psi : k^r \to k^r$. In the motivic homotopy category, this determines a map $\widetilde{\psi} : \mathbb{P}^r_k/\mathbb{P}^{r-1}_k \to \mathbb{P}^r_k/\mathbb{P}^{r-1}_k$, analogously to how a linear isomorphism $\mathbb{R}^r \to \mathbb{R}^r$ determined a map $S^r \to S^r$. Moreover, motivically, $\mathbb{P}^r_k/\mathbb{P}^{r-1}_k$ is a sphere, and hence the homotopy class of $\widetilde{\psi}$ represents an element of $GW(k)$, which turns out to precisely be the rank one bilinear form $\langle \det \psi \rangle$.

Viewed this way, the isomorphism class $\langle \det ds \rangle$ is the motivic analog of the sign of the determinant $\det ds$, at least when used to assign a local index to a zero of a section of a vector bundle\footnote{And also note that the multiplicative group of rank one non-degenerate bilinear forms over $\mathbb{R}$ is precisely the group of signs, i.e. the multiplicative group $\{\pm 1\}$}. In \cite{index}, Kass and Wickelgren use this idea to develop a fairly broad technique for counting zeroes of vector bundles over smooth schemes. Underlying their technique is the following orientability requirement:

\begin{dfn}[relative orientation]
Let $p : X \to \Spec k$ be a smooth scheme, and $E$ a vector bundle over $X$. Then $E$ is said to be relatively orientable if there is an isomorphism
\[
\rho : \det E \otimes \omega_{X/k} \to L^{\otimes 2}
\]
for some line bundle $L$ over $X$. The isomorphism $\rho$ is called a relative orientation, and the pair $(E, \rho)$ will be called a relatively oriented vector bundle.
\end{dfn}

Now continuing the notation in the statement of the definition, and assuming that $\rk E = \dim X = r$, suppose $s$ is a section of $E$ whose zero locus consists of finitely many closed points. Consider some zero $z$ of $s$, and suppose that there is a neighborhood $U$ of $z$ and an isomorphism $u: U \cong \mathbb{A}^r_k$ (or an isomorphism with an open subset of $\mathbb{A}^r_k$). Note that the coordinate vector fields on $\mathbb{A}^r_k$ determine a basis  $\{\partial_{u_1}|_z, \ldots, \partial_{u_r}|_z\}$ for $(T_X)_z$.

Next, suppose that there is a trivialization of $E|_U$ by sections $\{e_1, \ldots, e_r\}$ such that the map $\det (T_X)_z \to \det E_z$ defined by
\[
\partial_{u_1}|_z \wedge \cdots \wedge \partial_{u_r}|_z\longmapsto e_1 \wedge \cdots \wedge e_r
\]
is a square in $(\omega_X)_z \otimes \det E_z \cong (L_z)^{\otimes 2}$. Then we make the following definiton:

\begin{dfn}[good parametrization]
In the notation of the preceding paragraphs, and the conditions described, suppose also that the map $s_{u,e}:\mathbb{A}^r_k \to \mathbb{A}^r_k$ corresponding to $s$ over $U$ is {\'e}tale at $z$. Then we will refer to the coordinates $u: U \to \mathbb{A}^r_k$ (allowing this notation to also include the case of an isomorphism between $U$ and an open subset of $\mathbb{A}^r_k$) and the trivialization $\{e_1, \ldots, e_r\}$ of $E|_U$ together as a good parametrization near $z$.
\end{dfn}

Continuing with the same notation and assumptions, we consider two cases: first, suppose $z$ is $k$-rational, i.e. $\kappa(z) = k$. Then evaluating the Jacobian matrix $\left(\frac{\partial (s_{u,e})_i}{\partial u_j}\right)$ at $z$ yields a matrix of elements of $k$. This matrix has a determinant in $k$, which depends, as in the case of a section of a vector bundle over a manifold, on the choice of coordinates and trivialization. However, again analogous to the classical case, Kass and Wickelgren show in \cite{index} that provided that a good parametrization is used to compute the determinant, the bilinear form
\[
\left \langle \det \left(\frac{\partial (s_{u,e})_i}{\partial u_j}\right) \right \rangle
\]
is well-defined up to isomorphism.

When $z$ is not $k$-rational, we need to work a bit harder. Evaluating the Jacobian matrix $\left(\frac{\partial (s_{u,e})_i}{\partial u_j}\right)$ at $z$ on the nose yields a matrix of linear maps $\kappa(z) \to k$. However, by base changing the map $s_{u,e}$ to a map $s'_{u,e} : \mathbb{A}^r_{\kappa(z)} \to \mathbb{A}^r_{\kappa(z)}$ and then evaluating at $z$ one obtains a matrix $\left(\frac{\partial (s'_{u,e})_i}{\partial u'_j}\right)$ of elements of $\kappa(z)$, and this matrix now has a determinant in $\kappa(z)$. We would like to try to use the bilinear form
\[
\left \langle \det \left(\frac{\partial (s'_{u,e})_i}{\partial u'_j}\right) \right \rangle
\]
to define our local sign, but we immediately run into the problem that this is a bilinear form over $\kappa(z)$, not over $k$.

If we make the additional assumption that $\kappa(z)/k$ is separable---which is automatically guaranteed if, for example, $k$ is perfect---then we can use the trace map $\Tr_{\kappa(z)/k} : \kappa(z) \to k$. This map is surjective, and hence for any vector space $V$ over $\kappa(z)$, and any non-degenerate symmetric bilinear form $b : V \otimes V \to \kappa(z)$, composing $b$ with $\Tr_{\kappa(z)/k}$ and viewing $V$ as a vector space over $k$ produces a non-degenerate symmetric bilinear form $\Tr_{\kappa(z)/k} b$.

In \cite{index}, Kass and Wickelgren show that, provided that a good parametrization is used, the bilinear form
\[
\Tr_{\kappa(z)/k} \left \langle \det \left(\frac{\partial (s'_{u,e})_i}{\partial u'_j}\right) \right \rangle
\]
is well-defined. Moreover, this recovers the same bilinear form that would have been defined if $z$ were $k$-rational, because $\Tr_{k/k}$ is the identity map. Consequently, we make the following definition:

\begin{dfn}[Jacobian form]\label{jacform}
Let $(E,\rho)$ be a relatively oriented vector bundle over a smooth scheme $X \to \Spec k$ for $k$ a perfect field, and assume that $\rk E = \dim X = r$. Let $s$ be a section of $E$ whose zero locus consists of finitely many closed points. Assume also that there is a good parametrization at every zero $z$ of $s$. Then we define the Jacobian form 
\[
\Tr_{\kappa(z)/k} \langle \Jac_z (s;\rho)\rangle
\]
at $z$ to be the well-defined bilinear form $k \otimes k \to k$ given by computing
\[
\Tr_{\kappa(z)/k} \left \langle \det \left(\frac{\partial (s'_{u,e})_i}{\partial u'_j}\right) \right \rangle
\]
in any good parametrization around $z$. Note that this bilinear form has rank $[\kappa(z) : k]$.
\end{dfn}

Now return to the situation of lines on a degree 4 del Pezzo surface. Then $X = \Gr_k(2,5)$ and $E = \Sym^2(S^\vee) \oplus \Sym^2(S^\vee)$, and we have that $X$ admits a cover by open sets which are isomorphic to $\mathbb{A}^6_k$. Moreover, for general $f_1$ and $f_2$, $Z(\sigma_1 \oplus \sigma_2)$ consists of finitely many closed points, and is itself {\'e}tale over $k$. For finite $k$, this can be refined to saying that there is a Zariski open subset of the space of sections of $E$ whose closed points correspond to degree 4 del Pezzos over a finite extension of $k$ where $Z(\sigma_1 \oplus \sigma_2)$ is finite {\'e}tale over $k$. Thus, for general $f_1$ and $f_2$, $\sigma_1 \oplus \sigma_2$ is a section whose zero set consists of finitely many closed points, at each of which there is a good parametrization. We would thus like to try to count lines on a del Pezzo by assigning each line its Jacobian form. But we run into a problem: $E$ is not relatively orientable.

To explain how we get around this problem, it is useful to explain why $E$ fails to admit a relative orientation:

Consider the Pl{\"u}cker embedding $X \hookrightarrow \mathbb{P}^9_k$. The Picard group of $X$ is generated by the restriction of $\mathcal{O}_{\mathbb{P}^9_k}(1)$ to $X$, which we will denote $\mathcal{O}_X(1)$. Moreover, the tautological line bundle on $\mathbb{P}^9_k$ restricts on $X$ to the determinant of $S$, so that $\det S = \mathcal{O}_X(-1)$. The tautological short exact sequence
\begin{center}
\begin{tikzcd}
0 \arrow{r}& S\arrow{r} & \mathscr{O}_X^{\oplus 5} \arrow{r}& Q \arrow{r}& 0
\end{tikzcd},
\end{center}
together with the isomorphism $T_{X/k} \cong S^\vee \otimes Q$, implies that $\omega_{X/k} = \mathcal{O}_X(-5)$.

We also have that $\det \Sym^2(S^\vee) = (\det S^\vee)^{\otimes 3}$, and hence $\det \Sym^2(S^\vee) = \mathcal{O}_X(3)$. Taken all together, we thus compute that, in the Picard group,
\[
\det E \otimes \omega_{X/k} = \mathcal{O}_X(1),
\]
and hence $E$ is not relatively orientable.

The Pl{\"u}cker embedding exhibits the zero locus of $\sigma_1 \oplus \sigma_2$ as closed points in $\mathbb{P}^9_k$. Provided that $|k| > 16$, we will show (Proposition \ref{nondeg one form}) that there is a section $s$ of $\mathcal{O}_{\mathbb{P}^9_k}(1)$, and hence a corresponding section of $\mathcal{O}_X(1)$, whose zero locus is disjoint from $Z(\sigma_1 \oplus \sigma_2)$. 

\begin{dfn}[non-degenerate on lines]\label{nondeg}
We will refer to a section $s$  of $\mathcal{O}(1)$ whose zero locus is disjoint from $Z(\sigma_1 \oplus \sigma_2)$ as a ``one form\footnote{Our terminology ``one form'' refers not to K{\"a}hler one forms, but to the fact that a section of $\mathcal{O}_{\mathbb{P}^n_k}(1)$ corresponds to a one form on $\mathbb{A}^{n+1}_k$, i.e. a degree one homogeneous polynomial} non-degenerate on lines.''
\end{dfn}

Letting $U$ denote the complement of $Z(s)$ in $X$, the fiber-wise map
\[
\alpha \oplus \beta \mapsto s \otimes \alpha \oplus s^{\otimes 2} \otimes \beta
\]
determines an isomorphism between $E|U$ and the restriction of
\[
\widetilde{E} := \mathcal{O}_X(1) \otimes \Sym^2(S^\vee) \oplus \mathcal{O}_X(2) \otimes \Sym^2(S^\vee).
\]
to $U$. By chasing through the same type of computation we used to show that $E$ is not relatively orientable, but this time for $\widetilde{E}$, we obtain a canonical relative orientation $\rho$ on $\widetilde{E}$. We now make the following definition:

\begin{dfn}[twisted Jacobian form]\label{twjacform}
With notation as in the preceding paragraphs, consider some $z \in Z(\sigma_1 \oplus \sigma_2)$, and let $\widetilde{\sigma}$ denote the section
\[
s \otimes \sigma_1 \oplus s^{\otimes 2} \otimes \sigma_2.
\]
We define
\[
\Tr_{\kappa(z)/k} \langle \widetilde{\Jac}_z (f_1,f_2; s)\rangle := \Tr_{\kappa(z)/k} \langle \Jac_z (\widetilde{\sigma}; \rho)\rangle, 
\]
where the right side is defined as in Definition \ref{jacform}
\end{dfn}

We are now prepared to state our main result in the case that $|k| > 16$:

\begin{thm}[Main Result]
Let $\Sigma = Z(f_1,f_2) \subset \mathbb{P}^4_k$ be a general smooth degree 4 del Pezzo surface over a perfect field $k$ of characteristic not equal to 2, and assume that $|k| \geqslant 16$. Let $s$ be a one-form non-degenerate on the lines on $\Sigma$ (see Definition \ref{nondeg}).  Let $\Lines(\Sigma)$ denote the set of linear embeddings $\mathbb{P}^1_{k'} \to \Sigma$, where $k'$ ranges over all finite extensions of $k$. Then
\begin{equation} \label{result}
\sum_{L \in \Lines(\Sigma)} \Tr_{\kappa(L)/k} \langle \widetilde{\Jac}_L (f_1, f_2;s)\rangle = 8H,
\end{equation}
where $H = \langle 1 \rangle + \langle -1\rangle \in GW(k)$, and the summand is the twisted Jacobian form of Definition \ref{twjacform}.
\end{thm}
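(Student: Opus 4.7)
The strategy is to apply the Kass--Wickelgren framework of \cite{index} to the relatively oriented bundle $(\widetilde{E},\rho)$, with the section $\widetilde{\sigma} = s\otimes \sigma_1 \oplus s^{\otimes 2}\otimes \sigma_2$, on the open subset $U = X\setminus Z(s)$ where $\widetilde{\sigma}$ has only isolated zeros — namely, the 16 lines on $\Sigma$. This follows the general approach of Larson--Vogt \cite{larsonvogt} for enumerative problems whose natural bundle fails to be relatively orientable: introduce a twist to restore orientability on the ambient space, then show that the resulting local-index sum over the ``good'' zero locus is insensitive to the choice of twist. The restriction to $U$ is essential, since $\widetilde{\sigma}$ vanishes along the entire divisor $Z(s)$ in addition to the lines.

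The first step is to verify that for a general $(f_1,f_2)$ and any one-form $s$ non-degenerate on lines, $\widetilde{\sigma}|_U$ has only isolated zeros and each admits a good parametrization in the sense of Definition \ref{jacform}: the zeros are the lines on $\Sigma$, which lie in $U$ by hypothesis on $s$, and each lies in an affine chart $V\cong \mathbb{A}^6_k$ of the Grassmannian on which $\widetilde{E}|_V$ is trivial. By Definition \ref{twjacform}, the left-hand side of (\ref{result}) is then exactly the Kass--Wickelgren local-index sum for $\widetilde{\sigma}$ on $U$. The heart of the proof is to show this sum is independent of both the del Pezzo $\Sigma$ (in the generic locus) and the choice of $s$. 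I would parametrize such triples $(f_1,f_2,s)$ by an open subvariety $\Omega$ of an affine space of coefficients, and show the sum is locally constant on $\Omega$ by an $\mathbb{A}^1$-deformation argument in the style of \cite{index} and \cite{larsonvogt}: along a one-parameter family in $\Omega$, the zero loci fit into an \'etale family, and the sum of trace forms specializes correctly by a Scheja--Storch-type argument; since $\rho$ is intrinsic to $\widetilde{E}$ and independent of $(f_1,f_2,s)$, no compatibility issue arises with the relative orientation. Once invariance is established, the theorem reduces to evaluating a single well-chosen example — for instance, a del Pezzo defined over $\mathbb{Q}$ with enough $\mathbb{Q}$-rational lines to make the local Jacobian forms computable in closed form — where one can verify that the answer has rank $16$ (forced by base change to $\overline{k}$) and pin down the residual invariants via base changes to $\mathbb{R}$ and $p$-adic completions to identify it as $8H$.

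The main obstacle is the invariance step. The standard Kass--Wickelgren Poincar\'e--Hopf theorem gives independence of the section only when the section has isolated zeros on all of $X$, which fails here because $\widetilde{\sigma}$ vanishes along the divisor $Z(s)$. One therefore cannot simply appeal to the main theorem of \cite{index}; rather, one must directly show that the partial sum over $U$ is a deformation invariant, by controlling the local behavior of the Jacobian forms at each line under variations of $s$ and verifying that no ``phantom contributions'' leak in from the zero divisor. I expect this to be the most delicate part of the argument, and it is what makes the present result a genuine application of the Larson--Vogt technique rather than a direct corollary of Kass--Wickelgren.
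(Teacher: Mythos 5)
Your setup is correct: twist $E$ by $s$ and $s^{\otimes 2}$ to get $\widetilde{E}$ with a canonical relative orientation $\rho$, restrict to $U = X \smallsetminus Z(s)$, and interpret the left-hand side of (\ref{result}) as the Kass--Wickelgren local index sum $\sum_{z \in Z(\widetilde{\sigma}) \cap U} \ind^{or}_z(\widetilde{\sigma};\rho)$ via Proposition \ref{oriented index}. But the route you propose to evaluate that sum — establish deformation invariance over a parameter space of $(f_1,f_2,s)$, then compute a single example and pin down the answer via base change — is not the paper's route, and it leaves the crucial step unresolved.

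The paper does not prove or use deformation invariance as a lemma, and it does not compute any example. Instead, it decomposes $Z(\widetilde{\sigma}) = Z(s) \coprod Z(\sigma_1 \oplus \sigma_2)$ as a disjoint union of regularly embedded clopen components and applies Fasel's oriented excess intersection formula (Formula 1, Section \ref{oriented intersection}): the total class $\widetilde{e}(\widetilde{E})([X])$ equals the sum over \emph{all} components, including $Z_0 := Z(s)$, of $i_*\bigl(\widetilde{e}(\mathcal{E}_k)(j_k^*[X])\bigr)$. Rearranging and applying $p^\rho_*$ gives
\[
\sum_{L \in Z(\sigma_1\oplus\sigma_2)} \ind^{or}_L(\widetilde{\sigma};\rho) = p^\rho_*\bigl(\widetilde{e}(\widetilde{E})([X])\bigr) - p^\rho_*\bigl(i_*(\widetilde{e}(\mathcal{E}_0)(j_0^*[X]))\bigr).
\]
Now $\widetilde{E}$ has a rank-$3$ (odd) summand, and $\mathcal{E}_0$ has rank $6 - 1 = 5$ (odd), so by Ananyevskiy (Theorem 7.4 of \cite{sloriented}) each term on the right is an integral multiple of $H$. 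Combined with the rank-$16$ computation (which you essentially have), this forces the sum to be $8H$. This is structurally quite different from what you propose: the global excess intersection identity absorbs the divisorial component $Z(s)$ as an explicit correction term whose contribution is identified as hyperbolic, rather than requiring a local analysis of how the index sum deforms as $s$ varies.

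The genuine gap in your proposal is exactly the step you flag as the hardest: you give no mechanism for proving that the partial index sum over $U$ is a deformation invariant when $\widetilde{\sigma}$ degenerates along the positive-dimensional locus $Z(s)$. The Scheja--Storch specialization arguments you gesture at are designed for isolated zeros and do not obviously extend; something like the excess intersection formula would be needed to control the contribution of $Z(s)$, at which point one may as well use that formula directly and skip invariance altogether, as the paper does. Even granting invariance, you would still owe a nontrivial example computation. So the proposal as written requires substantial additional machinery in precisely the place the paper uses a different, and more economical, tool.
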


\begin{rmk}\label{general}
For an infinite field, this result automatically applies to infinitely many degree 4 del Pezzo surfaces over $k$. For any particular finite field, it is conceivable that the result as stated does not apply to any degree 4 del Pezzo surface over $k$. However, the proof shows that there is a Zariski open subset in $\Spec \Sym^{\bullet} \Gamma\left(\Sym^2(S^\vee) \oplus \Sym^2(S^\vee)\right)$, every closed point of which corresponds to a degree 4 del Pezzo surface over a finite extension of $k$ where equation (\ref{result}) holds. 
\end{rmk}
In the course of the paper, we will explain how to extend this result to the case where $|k| \leqslant 16$.

\section{Relation To Other Work}

In addition to the work of Kass-Wickelgren and Bachmann-Wickelgren in defining enriched Euler classes, enrichment of enumerative geometry to take values in quadratic forms has been investigated by Levine in \cite{aspects}, and enriched results under the assumption of relative orientability have been obtained for various classical problems (see, e.g., \cite{pauli}, \cite{bezout},\cite{cdh},\cite{wendt},\cite{srinivasan}). The problem of enriching enumerative results in the case $k = \mathbb{R}$ has been studied in detail by Finashin and Kharlamov, among others---see, for example \cite{real}. 

The problem of quadratically enriched counts when relative orientability fails has been dealt with by Larson and Vogt in the case of bitangents to a smooth plane quartic \cite{larsonvogt}. Larson and Vogt define the notion of ``relatively orientable relative to a divisor,'' where a vector bundle is relatively orientable away from a certain effective divisor. They then show, essentially, that counts of zeroes are homotopy invariant as long as the homotopy does not move a zero across the chosen divisor, and use this to define counts dependent on choosing a connected component of the space of sections with isolated zeroes. Our approach is similar in that we rely on restricting to an open subset where the relevant vector bundle is relatively orientable. However, rather than dealing with homotopy invariance of counts directly, we use axiomatic properties of Chow-Witt groups and the Barge-Morel Euler class to obtain a well-defined count.

\section{Acknowledgments}

Thanks to Kirsten Wickelgren for support and extensive discussions, without which this paper could not have been written. Thank you to Olivier Wittenberg for helpful comments and correspondence. The author was partly supported by NSF DMS-2001890 and by a scholarship from the Sloan Foundation.

\section{Oriented Intersection Theory}\label{oriented intersection}

In order to prove our result, we will need to give a manifestly parametrization-invariant interpretation of the Jacobian form
\[
\Tr_{\kappa(L)/k} \langle \Jac_L(\sigma;\rho)\rangle.
\]
To do this, it will prove most convenient to first recall the classical intersection theory behind counting zeroes of sections of vector bundles, for which a good reference is \cite{intersection}, particularly Chapter 6.

Let $p : X \to \Spec k$ be smooth and proper of dimension $d$, and $E$ a vector bundle over $X$ of rank $d$. Let $z :X \to E$ be the zero section, and $s : X \to E$ any section such that $Z(s) \to X$ is a regular embedding. Then consider the fiber square
\begin{center}
\begin{tikzcd}
Z(s) \arrow{r}{j}\arrow{d}{i} & X \arrow{d}{s} \\
X \arrow{r}{z} & E
\end{tikzcd}
\end{center}
Because the bottom arrow is a regular embedding of codimension $d$, the refined Gysin homomorphism $z^! : CH^*(X) \to CH^{* + d - \codim Z(s)}(Z(s))$ is defined by $z^![V] = [X\cdot V]$, where $X \cdot V$ is the intersection product. By composing $z^!$ with proper push-forward $i_* : CH^*(Z(s)) \to CH^{*+\codim Z(s)}(X)$, one obtains a homomorphism $\phi : CH^*(X) \to CH^{*+d}(X)$, which turns out not to depend on $s$. Moreover, this homomorphism can be given a rather explicit description:

Because $Z(s) \to X$ is a regular embedding, it consists of regularly embedded clopen components $j_m : Z_m \to X$, and each has a normal bundle $N_{Z_m} X$. Because the normal bundle of $X$ in $E$ along $z$ is simply $E$ itself, $N_{Z_m}X$  is naturally mapped into\footnote{The normal cone of $Z$ in $X$ will be included into $i^*N_XE$ quite generally, and in the case that $Z \hookrightarrow X$ is a regular embedding, as it is here, the normal cone is a vector bundle, namely the normal bundle. For proof, see the beginning of section 6.1 of \cite{intersection}.} $j_m^*E$ and the excess bundle $\mathcal{E}_m$ on $Z_m$ is defined by the short exact sequence
\begin{center}
\begin{tikzcd}
0 \arrow{r} & N_{Z_m}X \arrow{r} & j_m^*E \arrow{r} &  \mathcal{E}_m \arrow{r} & 0.
\end{tikzcd}
\end{center}
Now let $e(\mathcal{E}_m)$ denote the top Chern class of $\mathcal{E}_m$, considered as a homomorphism $CH^*(Z_m) \to CH^{*+\rk \mathcal{E}_m}(Z_m)$. Then the excess intersection formula computes $\phi$: for any $\alpha \in CH^*(X)$, one has
\[
\phi(\alpha) = \sum_m i_*(e(\mathcal{E}_m)(j_m^*(\alpha))).
\]

At one extreme, where $s = z$, one has that $Z(s) = X$, and $\mathcal{E} = E$, and so also, in fact, that $\phi = e(E)$. At the other extreme, when $Z(s)$ consists of simple zeroes, each $\mathcal{E}_m$ is zero-dimensional, and hence $e(\mathcal{E}_m)$ is simply the identity on $CH^*(Z_m)$. Now letting $[X]$ denote the fundamental cycle of $X$, one has that
\[
p_*(e(E)([X])) = \sum_m \ind_{z_m}(s),
\]
where we define
\[
\ind_{z_m}(s) = p_*(i_*(j_m^*([X]))) \in CH^0(\Spec k) = \mathbb{Z}.
\]

When $k$ is algebraically closed, $\ind_{z_m}(s)$ is 1 at every simple zero, and by piggy-backing off this and considering the fibers of a base change morphism as we described in the first section, one can actually compute more generally that $\ind_{z_m}(s) = [\kappa(z_m):k]$.

As we explained in the first section, we need to take orientability data into account if we wish to refine this procedure in the case that $k$ is not algebraically closed. This orientability data can be captured using the Chow-Witt groups---also known as oriented Chow groups---of Barge-Morel and Fasel (see \cite{bargemorel} and \cite{chowwitt}). To explain how, we briefly return to the topological case:

Regardless of orientability assumptions, an Euler class can be defined for a vector bundle $E$ over a topological manifold $X$, at the cost of using not ordinary cohomology, but cohomology twisted by a local system. More precisely, letting $\mathfrak{o}(E)$ denote the local system whose fiber at $x \in X$ is $H_n(E|_x, E|_x \smallsetminus \{0\}; \mathbb{Z})$, an Euler class $e(E) \in H^n(X; \mathfrak{o}(E))$ can always be defined. Moreover, when $\mathfrak{o}(E)$ is isomorphic to the orientation sheaf of $X$, twisted Poincar{\'e} duality provides an isomorphism
\[
H^n(X; \mathfrak{o}(E)) \cong H^n(X; \mathfrak{o}(X)) \to H_0(X; \mathbb{Z}),
\]
providing $E$ with an Euler number. 

Returning to the motivic case, a similar story can be told. The Chow-Witt groups of Barge-Morel and Fasel give an enriched version of the ordinary Chow groups which can be twisted by any line bundle. Moreover, for any vector bundle $E$ over a smooth $k$-scheme $X$, an Euler class $\widetilde{e}(E)$ is defined by Barge-Morel in \cite{bargemorel} and Fasel in \cite{chowwitt}, which recovers the top Chern class in the sense that the diagram
\begin{center}
\begin{tikzcd}
\widetilde{CH}^*(X) \arrow{r}{\widetilde{e}(E)}  \arrow{d} & \widetilde{CH}^{* + r}(X; \det E^\vee) \arrow{d} \\
CH^*(X) \arrow{r}{e(E)} & CH^{* + r}(X)
\end{tikzcd}
\end{center}
commutes (see \cite{chowwitt}), where the vertical arrows are a natural comparison morphism from the Chow-Witt groups $\widetilde{CH}^*$ to ordinary Chow groups. Note that the top right is an analog of the classical Euler class being valued in cohomology twisted by $\mathfrak{o}(E)$. 

The cost of being able to twist by line bundles is that pull-back and push-forward become more complicated. For our purposes, it will be sufficient to note that for a regular embedding $i : X \to Y$, and any line bundle $L$ on $Y$, there is an induced pull-back
\[
i^* : \widetilde{CH}^*(Y; L) \to \widetilde{CH}^*(X; i^*L),
\]
while for a proper morphism $f : X \to Y$ of relative dimension $d$, and any line bundle $L$ on $Y$, there is an induced push-forward
\[
f_* : \widetilde{CH}^*(X; f^*L \otimes \omega_{X/k}) \to \widetilde{CH}^{*-d}(Y; L \otimes \omega_{Y/k}).
\]

Together, these allow an oriented version of the excess intersection formula, due to Fasel in \cite{excess}, to be used. First, considering the fundamental class $[X] \in \widetilde{CH}^0(X)$, $j_m^*([X])$ is defined in $\widetilde{CH}^0(Z_m)$. Then $\widetilde{e}(\mathcal{E}_m)(j_m^*([X]))$ is defined in $\widetilde{CH}^{d - \codim Z_m}(Z_m; \det\mathcal{E}_m^\vee)$.
But now the short exact sequence
\begin{center}
\begin{tikzcd}
0 \arrow{r} & N_{Z_m}X \arrow{r} & j_m^*E \arrow{r} &  \mathcal{E}_m \arrow{r} & 0
\end{tikzcd}
\end{center}
defining the excess normal bundle, affords an isomorphism
\[
\det \mathcal{E}_m^\vee \cong j_m^*\det E^\vee \otimes \det N_{Z_m}X,
\]
while the normal short exact sequence
\begin{center}
\begin{tikzcd}
0 \arrow{r} & T_{Z_m/k} \arrow{r} & j_m^*T_{X/k} \arrow{r} &  N_{Z_m}X \arrow{r} & 0
\end{tikzcd}
\end{center}
affords an isomorphism
\[
\det N_{Z_m}X \cong j_m^*\omega_{X/k}^\vee \otimes \omega_{Z_m/k}.
\]
Consequently, $\widetilde{e}(\mathcal{E}_m)(j_m^*([X]))$ is actually defined in the Chow-Witt group 
\[
\widetilde{CH}^{d - \codim Z_m}(Z_m; j_m^*(\det E^\vee \cong \omega_{X/k}^\vee) \otimes \omega_{Z_m/k}),
\] 
and considering the fact that $i|_{Z_m} = j_m$, we then have that $i_*(\widetilde{e}(\mathcal{E}_m)(j_m^*([X]))$ is defined in $\widetilde{CH}^d(X; \det E^\vee)$, and the excess intersection formula has an oriented analog due to Fasel (see Theorem 32 of \cite{excess} and Remark 5.22 from \cite{bw}):
\begin{for}[oriented excess intersection formula]
\[
\widetilde{e}(E)([X]) = \sum_m i_*(\widetilde{e}(\mathcal{E}_m)(j_m^*([X]))).
\]
\end{for}

Now recall that in the ordinary Chow case, we were able to recover the count by pushing forward along $p_* : CH^d(X) \to CH^0(\Spec k)$. The difficulty now is the restrictions on when push-forward is defined for Chow-Witt groups. The only line bundles on $\Spec k$ are trivial, and so there is not necessarily a push-forward
\[
p_* : \widetilde{CH}^d(X; \det E^\vee) \to \widetilde{CH}^0(\Spec k)
\]
defined.

It is here where the need for relative orientability enters the story: The Chow-Witt groups are SL-orientable, meaning that for two line bundles $L'$ and $L''$ over $X$, and an isomorphism $\psi : L' \to L'' \otimes L^{\otimes 2}$ for some third line bundle $L$, there is an induced isomorphism
\[
\psi : \widetilde{CH}^*(X; L') \to \widetilde{CH}^*(X; L'').
\]
For a relatively orientable vector bundle $E$, then, the relative orientation furnishes an isomorphism $\widetilde{CH}^*(X; \det E^\vee) \to \widetilde{CH}^*(X; \omega_{X/k})$, and hence we can consider an augmented pushforward $p_*^\rho : \widetilde{CH}^*(X; \det E^\vee) \to \widetilde{CH}^{* -d}(\Spec k)$ given as the composition
\begin{center}
\begin{tikzcd}
\widetilde{CH}^*(X; \det E^\vee) \arrow{r}{\rho} & \widetilde{CH}^*(X; \det E^\vee) \arrow{r}{p_*} & \widetilde{CH}^{*-d}(\Spec k).
\end{tikzcd}
\end{center}

Now we can define the oriented local index as follows:
\begin{dfn}
Let $\rho : \det E^\vee \otimes \omega_{X/k} \to L^{\otimes 2}$ be a relative orientation on $E$, and let $s$ be a section of $E$ whose zero locus is regularly embedded $i : Z(s) \to X$. Let $z \in Z(s)$ be an isolated zero (i.e. a closed point which is itself a clopen component of $Z(s)$). Let $j_z : \{z\} \to X$ be the inclusion. Then we define the oriented index to be
\[
\ind^{or}_{z}(s;\rho):= p^\rho_*(i_*(\widetilde{e}(\mathcal{E}_z)(j_z^*([X]))))
\]
\end{dfn}

Because $\widetilde{CH}^0(\Spec k) = GW(k)$ (see \cite{algtop}), this actually assigns an index valued in $GW(k)$ to each zero. Moreover, this index agrees with the one of Kass-Wickelgren, as proven in Sections 2.3 and 2.4 of \cite{bw}, in particular Proposition 2.31:
\begin{prop}\label{oriented index}
Let $X \to \Spec k$ be smooth, and $E$ a vector bundle over $X$, with $\rho :\omega_{X/k} \otimes \det E \to L^{\otimes 2}$ a relative orientation. Let $s$ be a section, and $z$ a simple zero of $s$ admitting a good parametrization, and such that $\kappa(z)/k$ is separable (e.g. if $k$ is perfect). Then
\[
\Tr_{\kappa(z)/k} \langle \Jac_z(s;\rho)\rangle = \ind^{or}_z(s;\rho).
\]
\end{prop}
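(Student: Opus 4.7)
The plan is to compute both sides in a good parametrization and match them as elements of $GW(k)$. Since $z$ is a simple zero, the differential $ds_z : (T_X)_z \to E_z$ is an isomorphism; combined with the identification $N_{\{z\}}X \cong (T_X)_z$ coming from separability of $\kappa(z)/k$ (which forces $T_{\{z\}/k} = 0$), this makes the excess bundle $\mathcal{E}_z$ vanish. Hence the oriented excess intersection formula collapses and
\[
\ind^{or}_z(s;\rho) = p^\rho_*\bigl((j_z)_*(j_z^*[X])\bigr),
\]
where $j_z^*[X]$ is the fundamental class $\langle 1\rangle \in \widetilde{CH}^0(\{z\}) = GW(\kappa(z))$.

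The first step is to pass to a local model via the good parametrization. By naturality of the Chow--Witt formalism under open immersions, we may restrict to the open neighborhood $U$ of $z$. Using the coordinates $u : U \to \mathbb{A}^r_k$ and the trivialization $\{e_j\}$ of $E|_U$, the section $s|_U$ becomes a morphism $f = (f^1,\ldots,f^r) : U \to \mathbb{A}^r_k$ étale at $z$ and sending $z$ to the origin, and both $X$ and $E$ are trivialized compatibly with $\rho$.

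The central step is to identify $(j_z)_*(j_z^*[X])$, after applying the twist isomorphism coming from $\rho$, with a Jacobian class. The map $f$ exhibits $\{z\}$ as the preimage of the origin under an étale morphism, and by étale base-change for Chow--Witt push-forwards this reduces $(j_z)_*(j_z^*[X])$ to $f^*$ of the local contribution at the origin to the Chow--Witt Euler class of the trivial rank $r$ bundle on $\mathbb{A}^r_k$. The latter contribution is $\langle 1\rangle$, so pullback along the étale map $f$ produces the class $\langle \det(\partial f^i/\partial u^j)_z\rangle \in GW(\kappa(z))$ via the standard $GL_r$-action on Chow--Witt twists. The good parametrization hypothesis---that the map $\partial_{u_1}\wedge\cdots\wedge \partial_{u_r}\mapsto e_1\wedge\cdots\wedge e_r$ lies in the square class of $(\omega_{X/k}\otimes \det E)_z$ under $\rho$---is exactly what ensures, via SL-invariance of Chow--Witt twists, that the orientation-induced change of twist contributes no extra scalar.

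Finally, the composition $p\circ j_z : \Spec\kappa(z) \to \Spec k$ is finite étale, and the induced push-forward on $\widetilde{CH}^0 = GW$ is the Scharlau transfer $\Tr_{\kappa(z)/k}$ (\cite{chowwitt}). Assembling the three steps yields
\[
\ind^{or}_z(s;\rho) = \Tr_{\kappa(z)/k}\langle \det(\partial f^i/\partial u^j)_z\rangle = \Tr_{\kappa(z)/k}\langle \Jac_z(s;\rho)\rangle.
\]
The main obstacle is the careful bookkeeping of twists: one has to verify that the trivialization of $\det\mathcal{E}_z^\vee$, the identification $\omega_{\{z\}/k}\cong \mathcal{O}_{\{z\}}$ arising from separability, and the orientation isomorphism $\det E^\vee \cong \omega_{X/k}$ compose coherently so that the Jacobian form appears with no spurious square-class twist and no factor of $\langle -1\rangle$ or the like.
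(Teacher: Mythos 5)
The paper does not prove this statement itself; it is stated with a citation to Proposition 2.31 of \cite{bw}, so there is no in-paper proof to compare against. Your outline captures the right shape of the argument---vanishing of the excess bundle at a simple zero, reduction to a local model via the good parametrization, and identification of the pushforward on $\widetilde{CH}^0$ with the Scharlau transfer $\Tr_{\kappa(z)/k}$---but the central step has a genuine flaw as written. Pulling back the class $\langle 1 \rangle$ along the \'etale map $f$ yields $\langle 1 \rangle$, not $\langle \det(\partial f^i / \partial u^j)_z\rangle$; \'etale pullback of an untwisted class does not manufacture a Jacobian determinant, and ``the standard $GL_r$-action on Chow--Witt twists'' is not something that a scheme morphism $f$ invokes.

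The Jacobian instead arises from the comparison of two trivializations of the twist line bundle in the pushforward $(j_z)_*$. That pushforward is defined using the canonical isomorphism $\omega_{\{z\}/k} \cong j_z^*\omega_{X/k} \otimes \det N_{\{z\}}X$ together with $N_{\{z\}}X \cong j_z^*E$ via $ds_z$. The good parametrization supplies independent trivializations of $j_z^*\omega_{X/k}$ (via the coordinates $u$) and $j_z^*\det E$ (via $\{e_j\}$), and the discrepancy between the trivialization of $\det N_{\{z\}}X$ coming from $ds_z$ and the one coming from $\{e_j\}$ is exactly $\det(\partial f^i/\partial u^j)_z$. You gesture toward this bookkeeping in your closing paragraph, but the ``pullback along $f$'' argument you offer earlier does not execute it and as stated gives $\langle 1\rangle$ rather than the Jacobian class. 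Replacing that step with the explicit comparison of trivializations would make the sketch correct.
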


We now have the notation and definitions needed to prove our main result in the next section.

\section{Proof of Main Result}

We will first prove the following:
\begin{prop}\label{nondeg one form}
Let $|k| > 16$. Then there is a one-form non-degenerate on lines. 
\end{prop}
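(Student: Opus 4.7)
The plan is to reformulate the question projective-dually. A section of $\mathcal{O}_{\mathbb{P}^9_k}(1)$ up to scalar is a $k$-rational point of the dual projective space $(\mathbb{P}^9_k)^\vee$, and its zero locus is the corresponding hyperplane in $\mathbb{P}^9_k$. Via the Pl{\"u}cker embedding, the zero locus $Z(\sigma_1 \oplus \sigma_2) \subset \Gr_k(2,5)$ maps to a finite collection of closed points of $\mathbb{P}^9_k$ with $\sum_L [\kappa(L):k] = 16$, as recorded in the introduction. The task is therefore to produce a $k$-rational point of $(\mathbb{P}^9_k)^\vee$ missing the union of the incidence loci $B_L \subset (\mathbb{P}^9_k)^\vee$ of hyperplanes through each such closed point $L$.

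The first step is to identify each $B_L$ as a proper linear subspace. Writing $L = [L_0 : \cdots : L_9]$ with $L_i \in \kappa(L)$, the locus $B_L$ is the kernel of the $k$-linear evaluation map $k^{10} \to \kappa(L)$, $(a_0, \ldots, a_9) \mapsto \sum_i a_i L_i$, so its codimension in $(\mathbb{P}^9_k)^\vee$ equals $c_L := \dim_k \spn_k\{L_0, \ldots, L_9\}$. Since at least one $L_i$ is nonzero we have $c_L \geq 1$, and since the $L_i$ all lie in the $d_L$-dimensional $k$-vector space $\kappa(L)$ we have $c_L \leq d_L := [\kappa(L):k]$; in particular each $B_L$ is a proper linear subspace.

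For infinite $k$ the proposition is immediate, since a finite union of proper linear subspaces of $(\mathbb{P}^9_k)^\vee$ cannot exhaust its $k$-points. For finite $k = \mathbb{F}_q$ I would finish by point-counting. The total $|(\mathbb{P}^9_k)^\vee(k)|$ is $(q^{10}-1)/(q-1)$, while each $|B_L(k)|$ equals $(q^{10-c_L}-1)/(q-1) \leq (q^9-1)/(q-1)$; since there are at most $\sum_L d_L = 16$ closed points $L$, the bad set contains at most $16(q^9-1)/(q-1)$ $k$-rational hyperplanes. A short manipulation shows this is strictly less than $(q^{10}-1)/(q-1)$ precisely when $q \geq 17$, which is the hypothesis $|k| > 16$, yielding the required hyperplane.

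The whole argument is elementary; the mildly subtle point is the codimension computation for $B_L$, and the explicit threshold $|k| > 16$ is forced by the worst case in which all $16$ lines are $k$-rational. I do not foresee a genuine obstacle.
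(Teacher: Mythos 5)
Your proof is correct and takes essentially the same approach as the paper's: both reduce to the observation that $Z(\sigma_1\oplus\sigma_2)$ has at most $16$ closed points, identify the sections of $\mathcal{O}(1)$ vanishing at a given closed point as a proper linear subspace (the kernel of the evaluation map into the residue field), and conclude because $16$ proper subspaces cannot cover the space of sections when $|k|>16$. The paper simply quotes this covering fact in its Lemma \ref{non-vanishing}, whereas you prove it explicitly via a point count over $\mathbb{F}_q$ in the only case where it is not immediate.
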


\noindent We will require the following lemma, which is well-known, but which we include for the sake of completeness:

\begin{lem}\label{non-vanishing}
Let $k$ be a field such that $|k| > n$, and let $z_1, \ldots, z_n$ be points in $\mathbb{P}^d_k$ for any $d$. Then there is a section $s$ of $\mathcal{O}_{\mathbb{P}^d_k}(1)$ such that $Z(s)$ does not contain any of the points $z_1, \ldots, z_n$.
\end{lem}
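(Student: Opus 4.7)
The plan is to reduce to the standard fact that a vector space over a field with more than $n$ elements cannot be written as a union of $n$ proper subspaces, and then apply this to the vector space of linear forms.

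Concretely, I will identify $V := H^0(\mathbb{P}^d_k, \mathcal{O}_{\mathbb{P}^d_k}(1))$ with the $(d+1)$-dimensional $k$-vector space of linear forms on $\mathbb{A}^{d+1}_k$. For each closed point $z_i$, choose any nonzero element $\widetilde z_i$ of the fiber of the tautological line bundle over $z_i$, viewed in $\mathbb{A}^{d+1}_{\kappa(z_i)}$; then evaluation of a linear form on $\widetilde z_i$ gives a $k$-linear map $\mathrm{ev}_{z_i} : V \to \kappa(z_i)$, and the set $V_i := \ker(\mathrm{ev}_{z_i}) \subseteq V$ consists precisely of those sections vanishing at $z_i$. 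Since there obviously exists at least one linear form not vanishing at $z_i$, each $V_i$ is a proper $k$-subspace of $V$.

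It then remains to show that $\bigcup_{i=1}^n V_i \ne V$ whenever $|k| > n$, so that some $s \in V \smallsetminus \bigcup_i V_i$ gives the required section. I will prove this by induction on $n$. The case $n=1$ is immediate. For the inductive step, assume $v \in V \smallsetminus (V_1 \cup \cdots \cup V_{n-1})$ exists; if $v \notin V_n$ we are done, otherwise pick $w \in V \smallsetminus V_n$ and consider the family $v + \lambda w$ for $\lambda \in k$. For $i < n$ the set of $\lambda$ with $v + \lambda w \in V_i$ has size at most $1$ (two such $\lambda$ would force $w \in V_i$ and hence $v \in V_i$, contradicting the choice of $v$), and for $i=n$ the only such $\lambda$ is $\lambda = 0$ because $w \notin V_n$. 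Thus at most $n$ values of $\lambda$ are bad, so under the hypothesis $|k| > n$ some $\lambda$ produces a vector outside all the $V_i$.

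There is no real obstacle here beyond bookkeeping; the only thing to watch is the (mild) subtlety that the residue field $\kappa(z_i)$ need not equal $k$, which is why I phrase the vanishing condition as $k$-linearity of $\ker(\mathrm{ev}_{z_i})$ rather than as a codimension-one condition. With that clarified, the subspace-covering argument applies verbatim and produces the desired section $s$.
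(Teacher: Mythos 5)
Your proof is correct and follows the same approach as the paper: identify $\Gamma(\mathcal{O}_{\mathbb{P}^d_k}(1))$ with the space of linear forms, observe that the sections vanishing at $z_i$ form a proper $k$-subspace (the kernel of the $k$-linear evaluation map into $\kappa(z_i)$), and conclude by the fact that a $k$-vector space with $|k| > n$ is not a union of $n$ proper subspaces. The only difference is that the paper cites this last covering fact as known, whereas you supply the standard inductive proof of it; both are fine.
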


\begin{proof}[Proof of lemma]
Sections of $\mathcal{O}_{\mathbb{P}^d_k}(1)$ are precisely degree one homogeneous polynomials on $\mathbb{A}^{d+1}_k$, and hence $\Gamma(\mathcal{O}_{\mathbb{P}^d_k}(1))$ isomorphic as a vector space over $k$ to $(k^{d+1})^\vee$. Choose an identification. Now for each $i = 1, \ldots, n$, consider the evaluation map $\ev_i : (k^{d+1})^\vee \to \kappa(z_i)$. This is $k$-linear and non-zero, and hence $\ker \ev_i$ is a positive codimension subspace of $(k^{d+1})^\vee$. Now if $z_i \in Z(s)$ for some section $s$ of $\mathcal{O}_{\mathbb{P}^d_k}(1)$, then $s \in \ker \ev_i$.

Consequently, to find some $s$ such that $Z(s)$ does not contain any of the points $z_i$, it suffices to show that
\[
\bigcap_i \left((k^{d+1})^\vee \smallsetminus \ker \ev_i\right)
\]
is non-empty, or, equivalently, that
\[
\bigcup_i \ker \ev_i
\]
is not all of $(k^{d+1})^\vee$. But because $\ker \ev_i$ has positive codimension, this follows from the fact that $|k| > n$.
\end{proof}

\begin{proof}
By the lemma, we are reduced to proving that $Z(\sigma_1 \oplus \sigma_2)$ consists of at most 16 closed points. But this follows from the result stated in the introduction, that summing over the closed points in $Z(\sigma_1 \oplus \sigma_2)$, with each point $z$ given weight $[\kappa(z) : k]$, gives 16. Because each such weight is $\geqslant 1$, there must be at most 16 points. 
\end{proof}

Returning to the Pl{\"u}cker embedding $X \to \mathbb{P}^9_k$, we have that there is a section $s$ of $\mathcal{O}_{\mathbb{P}^9_k}(1)$ such that $Z(s)$ is disjoint from $Z(\sigma_1 \oplus \sigma_2)$.

Now $s$ restricts on $X$ to a section of $\mathcal{O}_X(1)$. We define $U:= X \smallsetminus Z(s)$, and, because $Z(s)$ is disjoint from $Z(\sigma_1 \oplus \sigma_2)$, $U$ is a neighborhood of $Z(\sigma_1 \oplus \sigma_2)$. Moreover, $s$ is non-vanishing on $U$, and hence the assignment $\alpha \oplus \beta \mapsto s \otimes \alpha \oplus s^{\otimes 2} \otimes \beta$ determines an isomorphism
\[
\phi : E|_U \to \left(\mathcal{O}_X(1) \otimes \Sym^2(S^\vee) \oplus \mathcal{O}_X(2) \otimes \Sym^2(S^\vee)\right)|_U.
\]
Now let $\widetilde{E} := \mathcal{O}_X(1) \otimes \Sym^2(S^\vee) \oplus \mathcal{O}_X(2) \otimes \Sym^2(S^\vee)$, so that this isomorphism can be rewritten as $\phi : E|_U \to \widetilde{E}|_U$. We easily compute, again in the Picard group, that
\[
\det \widetilde{E} \otimes \omega_{X/k} = \mathcal{O}_X(10),
\]
whence $\widetilde{E}$ is relatively orientable. Moreover, chasing through the steps in the computation yields a canonical isomorphism $\rho : \det \widetilde{E} \otimes \omega_{X/k} \to (\mathcal{O}_X(5))^{\otimes 2}$. Through $\phi$, then, this also provides a relative orientation $\varrho$ of $E|_U$.

We are now prepared to prove the main result:

\begin{proof}[Proof of Main Result and Remark \ref{general}]

Consider the section
\[
\widetilde{\sigma} := \phi(\sigma) = s \otimes \sigma_1 \oplus s^{\otimes 2} \otimes \sigma_2.
\]

\noindent By Theorem 2.1 of \cite{debarremanivel}, we choose $f_1$ and $f_2$ general so that $Z(\sigma_1 \oplus \sigma_2)$ is finite {\'e}tale over $k$. In the case of a finite field, this may correspond to a finite extension of the base field; we will now denote this extension by $k$ (see Remark \ref{general}).  By construction (see Definition \ref{twjacform}), we have for each $L \in Z(\sigma_1 \oplus \sigma_2)$, that
\[
\Tr_{\kappa(L)/k} \langle \widetilde{\Jac}_z (f_1,f_2; s)\rangle := \Tr_{\kappa(L)/k} \langle \Jac_L (\widetilde{\sigma}; \rho)\rangle.
\]
Hence it suffices to show that
\[
\sum_{L \in Z(\sigma_1 \oplus \sigma_2)} \Tr_{\kappa(L)/k} \langle \Jac_L (\widetilde{\sigma}; \rho)\rangle = 8H.
\]
But by Proposition \ref{oriented index}, we have that 
\[
\sum_{L \in Z(\sigma_1 \oplus \sigma_2)} Tr_{\kappa(L)/k} \langle  \Jac_L(\widetilde{\sigma}; \rho) \rangle = \sum_{z \in Z(\widetilde{\sigma}) \cap U} \ind^{or}_z(\widetilde{\sigma}; \rho).
\]
We will consider both sides of this equation, and check two facts:
\begin{enumerate}[(i)]
\item The left side has rank 16.
\item The right side is an integral multiple of $H$.
\end{enumerate}

\noindent To check (i), first note that
\[
\rk \left( \sum_{L \in Z(\sigma_1 \oplus \sigma_2)} \Tr_{\kappa(L)/k} \langle \Jac_L (\widetilde{\sigma}; \rho)\rangle \right) = \sum_{L \in Z(\sigma_1 \oplus \sigma_2)} \rk \left(\Tr_{\kappa(L)/k} \langle \Jac_L (\widetilde{\sigma}; \rho) \rangle\right)
\]
Moreover, almost by construction (see Definition \ref{jacform}),
\[
\rk \Tr_{\kappa(L)/k} \langle \Jac_L (\widetilde{\sigma}; \rho) \rangle = [\kappa(L):k].
\]
Hence
\[
\rk \left( \sum_{L \in Z(\sigma_1 \oplus \sigma_2)} \Tr_{\kappa(L)/k} \langle \Jac_L (\widetilde{\sigma}; \rho)\rangle \right) = \sum_{L \in Z(\sigma_1 \oplus \sigma_2)} [\kappa(L):k],
\]
and we explained in the first section why the right side is equal to 16.

To check (ii), we first describe the structure of $Z(\widetilde{\sigma})$. Prima facie, it is given by
\[
Z(\widetilde{\sigma}) = Z(s \otimes \sigma_1) \cap Z(s^{\otimes 2} \otimes \sigma_2) = \left(Z(s) \coprod Z(\sigma_1)\right) \cap \left(Z(s^{\otimes 2}) \coprod Z(\sigma_2)\right).
\]
But because $Z(s)$ and $Z(s^{\otimes 2})$ are both disjoint from $Z(\sigma_1) \cap Z(\sigma_2) = Z(\sigma_1 \oplus \sigma_2)$ by assumption, this simplifies to
\[
Z(\widetilde{\sigma}) = Z(s) \cap Z(s^{\otimes 2}) \coprod Z(\sigma_1 \oplus \sigma_2).
\]
But $Z(s)  \cap Z(s^{\otimes 2} ) = Z(s)$, so we finally obtain
\[
Z(\widetilde{\sigma}) = Z(s) \coprod Z(\sigma_1 \oplus \sigma_2),
\]
expressing the zero scheme of $\widetilde{\sigma}$ as a the disjoint union\footnote{It is the appearance of $Z(s)$ as a component of the zero locus which motivates the appearance of the $s^{\oplus 2}$ factor in the second summand of $\widetilde{\sigma}$.} of $Z(s)$, which is regularly embedded because it is locally given by a regular sequence containing the single element $s$, and $Z(\sigma_1 \oplus \sigma_2)$, which is regularly embedded by assumption.

Hence $Z(\widetilde{\sigma})$ is regularly embedded, so now for each clopen component $Z_k$ of $Z(\widetilde{\sigma})$, let $\mathcal{E}_k$ denote the excess normal bundle on $Z_k$ described in Section \ref{oriented intersection}, let $j_k :Z_k \to X$ be the inclusion, and let $i : Z \to X$ be the inclusion of the whole zero locus. Recall that the oriented excess intersection formula (see Section \ref{oriented intersection}, particularly Formula 1 and the discussion preceding) computes
\[
\widetilde{e}(\widetilde{E})([X]) = \sum_k i_*(\widetilde{e}(\mathcal{E}_k)(j_k^*([X]))),
\]
where $\widetilde{e}$ is the Chow-Witt Euler class of Barge-Morel and Fasel (again see Section \ref{oriented intersection}).

Now letting $Z_0 = Z(s)$ and $Z_1, \ldots, Z_m$ denote the closed points making up $Z(\sigma_1\oplus \sigma_2)$, we have
\[
\sum_{k =1}^m i_*(\widetilde{e}(\mathcal{E}_k)(j_k^*([X]))) = \widetilde{e}(\widetilde{E})([X]) - i_*(\widetilde{e}(\mathcal{E}_0)(j_0^*([X]))),
\]
and hence (see Section \ref{oriented intersection} for notation)
\[
\sum_{z \in Z(\sigma_1 \oplus \sigma_2)} \ind_z^{or}(\widetilde{\sigma};\rho) = p^\rho_*(\widetilde{e}(\widetilde{E})([X])) - p^\rho_*(i_*(\widetilde{e}(\mathcal{E}_0)(j_0^*([X])))).
\]

Because $\widetilde{E}$ has an odd-rank summand, $p^\rho_*(\widetilde{E}([X]))$ is an integer multiple of $H$ by a result of Ananyevskiy (Theorem 7.4 of \cite{sloriented}). Moreover, because $Z_0 = Z(s)$ has codimension 1 in $X$, and $\dim X = 6$, we have that $\mathcal{E}_0$ is itself odd rank, so by the same result of Ananyevskiy, $p^\rho_*(i_*(\widetilde{e}(\mathcal{E}_0)(j_0^*([X]))))$ is also an integer multiple of $H$.

Thus we have that the sum
\[
\sum_{L \in Z(\sigma_1 \oplus \sigma_2)} \Tr_{\kappa(L)/k} \langle \widetilde{\Jac}_z (f_1,f_2; s)\rangle = \sum_{L \in Z(\sigma_1 \oplus \sigma_2)} \Tr_{\kappa(L)/k} \langle \Jac_L (\widetilde{\sigma}; \rho)\rangle
\]
is an integral multiple of $H$ in $GW(k)$, which has rank 16, and hence
\[
\sum_{L \in \Lines(\Sigma)} \Tr_{\kappa(L)/k} \langle \widetilde{\Jac}_z (f_1,f_2; s)\rangle = 8H.
\]

\end{proof}

\section{Extension of Main Result}

As we stated at the beginning, this result can also be extended to the case where $|k| \leqslant 16$. Recall that the reason for making the requirement $|k| > 16$ was to ensure that we could find a section of $\mathcal{O}_X(1)$ which did not vanish on $Z(\sigma_1 \oplus \sigma_2)$. Our strategy for extending the main result to smaller fields is to choose an appropriate extension of $k$ with enough elements. We first explain how to choose such an extension.

For any field extension $K/k$, extension of scalars induces a map $GW(k) \to GW(K)$. In the case where $K$ and $k$ are finite fields of odd characteristic, this map is strongly controlled by its behavior on rank-one forms, in the following sense:

\begin{prop}
Let $K/k$ be an extension of finite fields of odd characteristic. Suppose that $k \to K$ induces an isomorphism $k^\times/(k^\times)^2 \to K^\times/(K^\times)^2$. Then the map $GW(k) \to GW(K)$ induced by extension of scalars is an isomorphism.
\end{prop}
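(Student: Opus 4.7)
The plan is to verify that the extension-of-scalars map $\Phi : GW(k) \to GW(K)$ is both surjective and injective, using the presentation of Proposition \ref{presentation} together with the standard structure of the Witt ring of a finite field. Surjectivity is essentially formal: $GW(K)$ is generated as an abelian group by the symbols $\langle b\rangle$ for $b \in K^\times$, and by hypothesis every such $b$ lies in the same coset of $(K^\times)^2$ as some $a \in k^\times$, so $b = ac^2$ for some $c \in K^\times$ and relation (i) of Proposition \ref{presentation} gives $\langle b\rangle = \langle a\rangle$ in $GW(K)$. Thus every generator of $GW(K)$ is the image of $\langle a\rangle \in GW(k)$.

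For injectivity, I would use the fundamental ideal together with the discriminant. For any field $F$ of characteristic not $2$, the rank map $GW(F) \to \mathbb{Z}$ is surjective with kernel the fundamental ideal $I(F)$; since $\Phi$ is rank-preserving, any element of $\ker \Phi$ lies in $I(k)$, so it is enough to show that $\Phi|_{I(k)} : I(k) \to I(K)$ is injective. At this point I would invoke two classical inputs: Pfister's theorem that the discriminant induces an isomorphism
\[
I(F)/I(F)^2 \xrightarrow{\ \sim\ } F^\times/(F^\times)^2
\]
for any field $F$, and the standard calculation $I(\mathbb{F}_q)^2 = 0$ for every finite field of odd characteristic (which falls out of the classification of non-degenerate symmetric bilinear forms over $\mathbb{F}_q$ by rank and discriminant). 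Combined, these identify $I(\mathbb{F}_q)$ with $\mathbb{F}_q^\times/(\mathbb{F}_q^\times)^2$ via the discriminant, and naturality of the discriminant in field extensions identifies $\Phi|_{I(k)}$ with the map on square classes $k^\times/(k^\times)^2 \to K^\times/(K^\times)^2$, which is an isomorphism by hypothesis.

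The argument is really just an exercise in assembling classical Witt ring facts, so I do not anticipate any serious obstacle. The one step worth writing out carefully is the naturality of the discriminant, which relates $\Phi|_{I(k)}$ to the map on square classes, but this is a formal consequence of how the discriminant acts on the generators $\langle a\rangle - \langle 1\rangle$ of $I$.
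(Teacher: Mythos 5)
Your argument is correct, but it routes through different machinery than the paper does. The paper's proof is entirely hands-on: it cites the classification of non-degenerate symmetric bilinear forms over a finite field of odd characteristic by rank and discriminant, observes that extension of scalars at the level of Gram matrices preserves rank and sends the discriminant class via the natural map $k^\times/(k^\times)^2 \to K^\times/(K^\times)^2$, and concludes the induced map on isomorphism classes (hence on Grothendieck--Witt groups) is a bijection. You instead filter by the fundamental ideal: surjectivity from the generators of $GW$ together with surjectivity on square classes, then injectivity by chasing through $\ker \Phi \subseteq I(k)$, Pfister's isomorphism $I/I^2 \cong F^\times/(F^\times)^2$, and the vanishing $I(\mathbb{F}_q)^2 = 0$. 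The mathematical content is the same in both cases --- $I^2=0$ over a finite field is essentially a restatement of the rank-and-discriminant classification --- but your version packages it in the structural language of the $I$-adic filtration of the Witt ring, while the paper keeps everything concrete at the level of matrices. Your approach is a bit more machinery than strictly necessary for such a small case, but it has the advantage of making transparent exactly which invariants control the problem; the paper's is shorter and more self-contained given the cited classification. One small caveat: in spelling this out you should be careful to distinguish the fundamental ideal $I(F) \subset W(F)$ from the kernel of the rank map $GW(F) \to \mathbb{Z}$; the quotient $GW(F) \to W(F)$ restricts to an isomorphism of abelian groups between the latter and the former, so the identification is harmless, but it is worth a sentence.
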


\noindent This result is well-known, but we include an outline of a proof for the sake of completeness:

\begin{proof}
It suffices to show that extension of scalars induces a bijection between isomorphism classes of non-degenerate quadratic forms over $k$ and over $K$. Let $Q$ be a non-degenerate quadratic form over $K$. By Theorem II.3.5 (in particular, the proof of part (1)) and Proposition II.3.4 of \cite{forms}, $Q$ can be expressed a diagonal matrix with entries all $1$'s except the last entry, $d(Q)$. Moreover $d(Q)$ is uniquely determined up to multiplication by a square, i.e. the class of $d(Q)$ in $K^\times/(K^\times)^2$ is well-defined, and depends only on the isomorphism class of $Q$. Finally, $Q$ is itself completely determined up to isomorphism by its rank and $d(Q)$.

The same applies to quadratic forms over $k$, and because extension of scalars is determined at the level of matrices by regarding a matrix over $k$ as a matrix over $K$, it is clear that extension of scalars sends a quadratic form $Q$ over $k$ of rank $r$ to a quadratic form $Q'$ over $K$ of rank $r$, with $d(Q')$ being the image in $K^\times/(K^\times)^2$ of $d(Q)$ under the map $k^\times/(k^\times)^2 \to K^\times/(K^\times)^2$. The claim follows immediately.
\end{proof}

Now our goal in selecting an appropriate extension $K$ of $k$ is to choose an extension with more than 16 elements, but one such that the induced map $k^\times/(k^\times)^2 \to K^\times/(K^\times)^2$ is an isomorphism, and hence the entire map $GW(k) \to GW(K)$, is an isomorphism. When $k$ is infinite, no extension need be chosen, so it suffices only to consider finite $k$. Moreover, we are only considering fields of characteristic not equal to 2, so it suffices to consider $k = \mathbb{F}_{p^q}$ for odd $p$. 

Now recall that there is an extension $\mathbb{F}_{p^{q'}}/\mathbb{F}_{p^q}$ precisely when $q|q'$, and in this case $[\mathbb{F}_{p^{q'}}: \mathbb{F}_{p^q} ] = q'/q$. Moreover, it is clearly always possible to find some $q'$ such that $p^{q'} > 16$ and such that $q'/q$ is odd. Hence our strategy comes down to the following fact:

\begin{prop}\label{GW iso}
Let $q'/q$ be odd. Then the inclusion $\mathbb{F}_{p^q} \to \mathbb{F}_{p^{q'}}$ induces an isomorphism 
\[
\mathbb{F}_{p^q}^\times/(\mathbb{F}_{p^q}^\times)^2 \to \mathbb{F}_{p^{q'}}^\times/(\mathbb{F}_{p^{q'}}^\times)^2
\]
\end{prop}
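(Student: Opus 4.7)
The plan is as follows. Since $p$ is odd, both $\mathbb{F}_{p^q}^\times$ and $\mathbb{F}_{p^{q'}}^\times$ are cyclic groups of even order, so the squaring map has index $2$ in each, and the quotients $\mathbb{F}_{p^q}^\times/(\mathbb{F}_{p^q}^\times)^2$ and $\mathbb{F}_{p^{q'}}^\times/(\mathbb{F}_{p^{q'}}^\times)^2$ are each isomorphic to $\mathbb{Z}/2$. Any group homomorphism between two copies of $\mathbb{Z}/2$ is either zero or an isomorphism, so it suffices to exhibit a single non-square in $\mathbb{F}_{p^q}^\times$ which remains a non-square in $\mathbb{F}_{p^{q'}}^\times$.

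To produce such an element, I would fix a generator $g$ of the cyclic group $\mathbb{F}_{p^{q'}}^\times$ and set $n := (p^{q'}-1)/(p^q-1)$. The unique subgroup of $\mathbb{F}_{p^{q'}}^\times$ of order $p^q - 1$, which is $\mathbb{F}_{p^q}^\times$, is generated by $g^n$. Since $g^n$ generates an even-order cyclic group, it is a non-square in $\mathbb{F}_{p^q}^\times$; on the other hand, it is a non-square in the ambient group $\mathbb{F}_{p^{q'}}^\times$ precisely when $n$ is odd (equivalently, when $g^n$ lies in the non-identity coset of $\langle g^2\rangle$). Thus the whole proposition reduces to the parity question: is $n$ odd whenever $d := q'/q$ is odd?

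For this I would use the geometric series factorization
\[
p^{q'} - 1 \;=\; (p^q)^d - 1 \;=\; (p^q - 1)\sum_{i=0}^{d-1} (p^q)^i,
\]
which gives $n = \sum_{i=0}^{d-1} (p^q)^i$. Each summand is odd (since $p$ is odd), and the sum of $d$ odd integers is odd if and only if $d$ is odd. By hypothesis $d$ is odd, so $n$ is odd and the argument closes.

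There is essentially no obstacle here; the content is entirely elementary once the question is recognized as a parity computation for the index $n = (p^{q'}-1)/(p^q-1)$. The only subtlety worth checking is that the inclusion really does send a generator of $\mathbb{F}_{p^q}^\times$ to a specified power $g^n$ of a generator of $\mathbb{F}_{p^{q'}}^\times$, and this follows at once from the uniqueness of a cyclic subgroup of given order in a finite cyclic group.
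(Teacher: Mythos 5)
Your proof is correct, and it takes a genuinely different (and somewhat slicker) route than the paper's. The paper proves injectivity and surjectivity separately: injectivity via a field-theoretic observation (a nontrivial element of the kernel would yield a degree-$2$ intermediate field inside an odd-degree extension, which is impossible by the tower law), and surjectivity via the explicit decomposition $a = a^\ell\,(a^{-m})^2$ with $\ell = (p^{q'}-1)/(p^q-1) = 2m+1$. You instead begin with the structural observation that both quotients are $\mathbb{Z}/2$ (since $p$ is odd), so the induced map is either zero or an isomorphism, and it therefore suffices to exhibit one non-square that stays a non-square. This collapses the whole argument to a single parity computation for the index $n = (p^{q'}-1)/(p^q-1)$, which you carry out with the same geometric-series identity the paper uses in its surjectivity step. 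What your approach buys is economy: one computation rather than two separate arguments, and no appeal to field theory at all. What the paper's version buys is that the surjectivity step produces an explicit witness for the square decomposition, which is slightly more constructive; its injectivity argument also generalizes more readily to settings where the two-element-group shortcut is unavailable. Both proofs are complete, and the shared heart of the matter --- that the index is odd because it is a sum of an odd number of odd terms --- is identical.
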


\noindent As before, this is standard, but we include the proof for completeness:

\begin{proof} To check injectivity, first suppose there is some $a \in \mathbb{F}_{p^q}^\times$ representing a non-trivial class in the kernel. Then $a$ does not have a square root in $\mathbb{F}_{p^q}$, but has a square root in $\mathbb{F}_{p^{q'}}$. But then the splitting field of $a$ would be an intermediate field between $\mathbb{F}_{p^q}$ and $\mathbb{F}_{p^{q'}}$ whose degree over $\mathbb{F}_{p^q}$ is 2, which is impossible because $[\mathbb{F}_{p^{q'}} : \mathbb{F}_{p^q} ]$ is odd.

To check surjectivity, recall that the multiplicative group $\mathbb{F}_{p^q}^\times$  and $\mathbb{F}_{p^{q'}}$ are cyclic groups of order $p^q - 1$ and $p^{q'} - 1$, respectively, and that the former is a subgroup of the latter. Hence, to check that an element of $\mathbb{F}_{p^{q'}}^\times$ is actually an element of $\mathbb{F}_{p^q}^\times$, it suffices to check that it has order dividing $p^q- 1$. 

Now consider any $a \in \mathbb{F}_{p^{q'}}^\times$. To show surjectivity, we will check that $a$ can be written as $a = \alpha \beta^{2}$ for some $\alpha \in \mathbb{F}_{p^q}$. To do this, set
\[
\ell := \frac{p^{q'} - 1}{p^q - 1} = 1 + (p^q) + \cdots + (p^q)^{q'/q - 1},
\]
and notice that $a^\ell$ has order dividing $p^q - 1$, and hence is in $\mathbb{F}_{p^q}^\times$. Moreover, because $q'/q$ is odd by assumption, $\ell$ is a sum of an odd number of odd numbers, and hence is odd, say $\ell = 2m + 1$. But now setting $\alpha = a^{\ell}$, and $\beta =a^{-m}$, we have that $\alpha \in \mathbb{F}_{p^q}$ and $a = \alpha \beta^2$.
\end{proof}

Now consider some field $k$, with characteristic not equal to 2, and with $|k| \leqslant 16$ (and hence $k$ is automatically perfect). After choosing an identification $k = \mathbb{F}_{p^q}$, there is a canonical way to choose some $q'$ such that $p^{q'} > 16$ and $q'/q$ is odd, namely we simply pick the smallest such $q'$. We now formalize this:

\begin{dfn}
Let $p \neq 2$ be prime, and $q$ some positive integer such that $p^q \leqslant 16$. Then $\hat{q}$ denotes the smallest multiple of $q$ such that $\hat{q}/q$ is odd and $p^{\hat{q}} > 16$.
\end{dfn}

This canonical choice also leads to a canonical extension $\mathbb{F}_{p^{\hat{q}}} / \mathbb{F}_{p^q}$, and thus after choosing an identification $k = \mathbb{F}_{p^q}$, there is a canonical extension $K/k$ such that $k \to K$ induces an isomorphism $GW(k) \to GW(K)$, namely we choose $K = \mathbb{F}_{p^{\hat{q}}}$. Note that the isomorphism $GW(k) \to GW(K)$ is now also canonical, because the inclusion $k \to K$ is canonical.

Now let $\hat{X}$ denote the base change of $X$ from $k$ to $K$, $\hat{f}_1$ and $\hat{f}_2$ the base changes of $f_1$ and $f_2$, and $\hat{\sigma}_1 \oplus \hat{\sigma}_2$ the base change of $\sigma_1 \oplus \sigma_2$. Then $Z(\hat{\sigma}_1 \oplus \hat{\sigma}_2)$ is itself the base change of $Z(\sigma_1 \oplus \sigma_2)$. Now let $\pi : Z(\hat{\sigma}_1 \oplus \hat{\sigma}_2) \to Z(\sigma_1 \oplus \sigma_2)$ denote the base change projection. Because $|K| > 16$, there is some section $\hat{s}$ of $\mathcal{O}_{\hat{X}}(1)$ whose zero locus misses $Z(\hat{\sigma}_1 \oplus \hat{\sigma}_2)$, and so we can define an extended version of the twisted Jacobian form:

\begin{dfn}[extension of twisted Jacobian form]
With the notation as in the preceding paragraphs, consider some $z \in Z(\sigma_1 \oplus \sigma_2)$. We define
\[
\widehat{\Tr}_{\kappa(z)/k} \langle \widetilde{\Jac}_z(f_1, f_2; \hat{s}) \rangle := \sum_{y \in \pi^{-1}(z)} \Tr_{\kappa(y)/K} \langle \widetilde{\Jac}_y (\hat{f}_1, \hat{f}_2; \hat{s})\rangle,
\]
which we regard as an element of $GW(k)$ through the identification $GW(k) \to GW(K)$. 
\end{dfn}

Considering our main result applied to $\hat{\Sigma} = Z(\hat{f}_1,\hat{f}_2)$, we have the following extension of our main result directly from definitions:

\begin{thm}[Extension of Main Result when $|k| \leqslant 16$]
Let $k = \mathbb{F}_{p^q}$, with $p \neq 2$, and consider a general del Pezzo surface $\Sigma = Z(f_1, f_2)$ in $\mathbb{P}^4_k$. Choose a section $\hat{s}$ of $\mathcal{O}_{\hat{X}}(1)$ which does not vanish on $Z(\hat{\sigma}_1 \oplus \hat{\sigma_2})$ (see the preceding paragraphs for notation). Then
\[
\sum_{L \in \Lines(\Sigma)} \widehat{\Tr}_{\kappa(L)/k} \langle \widetilde{\Jac}_z(f_1, f_2; \hat{s}) \rangle = 8H.
\]
\end{thm}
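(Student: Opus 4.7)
The plan is to reduce this extension to the main result already proven in the case $|K|>16$, by base changing everything to the canonically chosen extension $K = \mathbb{F}_{p^{\hat q}}$, and then descending the identity from $GW(K)$ back to $GW(k)$ via the canonical isomorphism $\iota : GW(k) \to GW(K)$ supplied by the preceding two propositions (combined with the choice of $\hat q$ making $\hat q/q$ odd and $p^{\hat q}>16$).

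First, I would verify that the base-changed data $(\hat f_1,\hat f_2,\hat s)$ satisfies the hypotheses of the main theorem over $K$. Genericity of $(f_1,f_2)$ entered the original proof only through the assertion that $Z(\sigma_1\oplus\sigma_2)$ is finite étale over $k$ of total length $16$; this is preserved under base change, so $Z(\hat\sigma_1\oplus\hat\sigma_2)$ is finite étale over $K$ of the same total length, making $\hat\Sigma = Z(\hat f_1,\hat f_2)$ general in the required sense. That $\hat s$ is non-degenerate on the lines of $\hat\Sigma$ is part of the hypothesis. Since $|K|>16$, the main result applies to give
\[
\sum_{L' \in \Lines(\hat\Sigma)} \Tr_{\kappa(L')/K} \langle \widetilde{\Jac}_{L'}(\hat f_1,\hat f_2;\hat s)\rangle = 8H \quad\text{in } GW(K).
\]

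Next, I would rearrange the left-hand side along the base change projection $\pi : Z(\hat\sigma_1\oplus\hat\sigma_2)\to Z(\sigma_1\oplus\sigma_2)$. Under the identifications $\Lines(\Sigma)\leftrightarrow Z(\sigma_1\oplus\sigma_2)$ and $\Lines(\hat\Sigma)\leftrightarrow Z(\hat\sigma_1\oplus\hat\sigma_2)$, and the decomposition $Z(\hat\sigma_1\oplus\hat\sigma_2) = \coprod_{L\in Z(\sigma_1\oplus\sigma_2)} \pi^{-1}(L)$, the above sum can be grouped as
\[
\sum_{L\in\Lines(\Sigma)}\ \sum_{L'\in\pi^{-1}(L)} \Tr_{\kappa(L')/K}\langle \widetilde{\Jac}_{L'}(\hat f_1,\hat f_2;\hat s)\rangle,
\]
and the inner sum is by definition $\iota\bigl(\widehat{\Tr}_{\kappa(L)/k}\langle \widetilde{\Jac}_L(f_1,f_2;\hat s)\rangle\bigr)$. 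Hence the identity displayed over $K$ becomes
\[
\iota\!\left(\sum_{L\in\Lines(\Sigma)}\widehat{\Tr}_{\kappa(L)/k}\langle \widetilde{\Jac}_L(f_1,f_2;\hat s)\rangle\right) = 8H \quad\text{in } GW(K).
\]

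Finally, because $\iota$ is a ring homomorphism sending $\langle 1\rangle \mapsto \langle 1\rangle$ and $\langle -1\rangle \mapsto \langle -1\rangle$, it sends $8H\in GW(k)$ to $8H\in GW(K)$. Since $\iota$ is an isomorphism by the propositions in this section, the equation can be uniquely pulled back, yielding the desired
\[
\sum_{L\in\Lines(\Sigma)}\widehat{\Tr}_{\kappa(L)/k}\langle \widetilde{\Jac}_L(f_1,f_2;\hat s)\rangle = 8H \quad\text{in } GW(k).
\]
The main conceptual obstacle was set up before the statement: selecting an odd-degree extension $K/k$ ensuring $|K|>16$ while preserving $GW$ by the squares-mod-squares criterion. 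Given that, the rest is a bookkeeping argument confirming that the extended twisted Jacobian form is built precisely so that summing it over $\Lines(\Sigma)$ matches, term by term, the sum of ordinary twisted Jacobian forms over $\Lines(\hat\Sigma)$.
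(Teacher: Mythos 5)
Your proposal is correct and matches the paper's intent precisely: the paper simply asserts the extension follows ``directly from definitions,'' meaning exactly the argument you spell out --- base change to $K$, apply the main result there, regroup the sum along the fibers of $\pi$ to recognize the definition of $\widehat{\Tr}$, and pull back through the canonical isomorphism $GW(k)\to GW(K)$. Your explicit verification that base change preserves the finite \'etale hypothesis and that the isomorphism fixes $8H$ is a sound fleshing-out of the bookkeeping the paper leaves implicit.
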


\bibliographystyle{plain}

\bibliography{bibliography.bib}{}

\end{document}